 \numberwithin{equation}{section}
\newcommand{\lcal}{\mathcal{L}}
\newtheorem{theo}{{\sc Theorem}}[section]
\newtheorem{defin}{{\sc Definition}}
\newtheorem{lem}[theo]{{\sc Lemma}}
\newenvironment{rem}{\medskip\noindent{\it Remark:\/} }{\medskip}
\renewcommand{\epsilon}{\varepsilon}
\newtheorem{theorem}{Theorem}
\newtheorem{lemma}[theorem]{Lemma}
\newtheorem{corr}[theorem]{Corollary}
\newtheorem{prop}[theorem]{Prop}
\newtheorem{proposition}[theorem]{Proposition}
\newtheorem{deff}[theorem]{Definition}
\newcommand{\bth}{\begin{theorem}}
\newcommand{\ble}{\begin{lemma}}
\newcommand{\bcor}{\begin{corr}}
\newcommand{\bdeff}{\begin{deff}}
\newcommand{\bprop}{\begin{proposition}}
\newcommand{\ele}{\end{lemma}}
\newcommand{\ecor}{\end{corr}}
\newcommand{\edeff}{\end{deff}}
\numberwithin{theorem}{section}
\newcommand{\eprop}{\end{proposition}}
\newcommand{\la}{\lambda}
\renewcommand{\Pi}{\varPi}
\renewcommand{\epsilon}{\varepsilon}
\newcommand{\R}{{\mathbb R}}
\newcommand{\Z}{{\mathbb Z}}
\begin{document}

\title[Focal points  II: the two-dimensional case]
{Focal points and sup-norms of eigenfunctions on analytic Riemannian
manifolds II: the two-dimensional case
}

\author[C. D. Sogge]{Christopher D. Sogge}
\address{Department of Mathematics, Johns Hopkins University, Baltimore, MD 21218, USA}
\email{sogge@jhu.edu}
\author{Steve Zelditch}
\address{Department of Mathematics, Northwestern University, Evanston, IL 60208, USA}
\email{s-zelditch@northwestern.edu}

\thanks{Th research of the first author was partially supported by NSF grant \# DMS-1361476 and the second by  \#  DMS-1206527.  
}

\begin{abstract} We use a purely dynamical argument on circle maps
to improve a recent result on real analytic surfaces possessing eigenfunctions
that achieve maximal sup norm bounds. The improved result is that there
exists a `pole'  $p$ so that all  geodesics emanating from $p$ are smoothly
closed .
\end{abstract}

\maketitle

\section{Introduction and main results}

In a recent article \cite{SZRA}, the authors gave a dynamical characterization
of compact real analytic Riemannian manifolds $(M^n, g)$ of dimension n possessing
$\Delta_g$-eigenfunctions 
$$(\Delta + \lambda^2_{j_k}) e_{j_l} = 0, \;\;\; ||e_{j_k}||_{L^2} = 1$$
of maximal sup norm growth,
 \begin{equation} \label{MAX}  \|e_j\|_{L^\infty(M)} \geq C_g \; \la_j^{\frac{n-1}{2}}. \end{equation}
Here, $C_g$ is a positive constant independent of $\lambda_j$. 
The main result of \cite{SZRA} (recalled more precisely below) is that if  $(M, g)$ 
possesses such a sequence $\{e_{j_k}\}$, then there must exist {\it self-focal points} $p$ at which all geodesics
from $p$ loop back to $p$ at some time. The minimal such time is called the first return time $T_p$. Moreover,
there must exist a self-focal point for which
the first return map $\eta_p: S^*_pM \to S^*_p M$ preserves an $L^1$ measure on the unit co-sphere $S^*_p M$ at p.   The purpose of this addendum is to add a purely dynamical
argument to the  main result of \cite{SZRA} to prove 
the stronger

\begin{theorem}\label{maintheorem}  Let $(M,g)$ be a compact real analytic compact surface without boundary.  If there exists a sequence of 
$L^2$-normalized eigenfunctions,
$(\Delta + \lambda^2_{j_k}) e_{j_k} = 0 $, 
satisfying $  \|e_{j_k}\|_{L^\infty(M)} \geq C_g \; \la_{j_k}^{\frac{1}{2}},$ then $(M, g)$
possesses a pole, i.e. a point $p$  so that every geodesic starting at $p$
returns to p at time $2 T_p$ as a smoothly closed geodesic. 
\end{theorem}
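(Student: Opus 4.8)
The plan is to reduce the theorem to a single statement about the first return map and then settle it by a soft dynamical argument. By the main result of \cite{SZRA} recalled above, maximal sup-norm growth produces a self-focal point $p$ whose first return map $\eta_p \colon S^*_p M \to S^*_p M$ preserves an $L^1$ measure $\mu$. Since $\dim M = 2$, the cosphere $S^*_p M$ is a circle, so $\eta_p$ is a circle map; because $g$ is real analytic and $p$ is self-focal, $\eta_p$ is a real-analytic orientation-preserving diffeomorphism of $S^1$, and $\mu$ is non-atomic (an $L^1$ density carries no point masses). The point $p$ is a pole in the sense of the theorem precisely when $\eta_p^2 = \mathrm{id}$: a geodesic leaving $p$ in direction $\xi$ returns at $T_p$ in direction $\eta_p(\xi)$ and again at $2T_p$ in direction $\eta_p^2(\xi)$, so it closes up smoothly at $2T_p$ iff $\eta_p^2(\xi)=\xi$ for every $\xi$. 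Thus the whole theorem follows once I show $\eta_p^2=\mathrm{id}$.

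First I would extract a rigidity constraint on the rotation number $\rho(\eta_p)$ from time-reversal symmetry. Writing $\tau(x,\xi)=(x,-\xi)$ and $G^t$ for the geodesic flow, one has $\tau\circ G^t = G^{-t}\circ \tau$; restricting to the fibre and using $\eta_p = G^{T_p}|_{S^*_pM}$ together with $\eta_p(S^*_pM)=S^*_pM$ gives $\sigma\,\eta_p\,\sigma = \eta_p^{-1}$, where $\sigma=-\mathrm{id}$ is the antipodal map of $S^1$, i.e.\ rotation by $\pi$. Since $\sigma$ is orientation-preserving, conjugation by it preserves rotation numbers, so $\rho(\eta_p)=\rho(\eta_p^{-1})=-\rho(\eta_p)$ in $\R/\Z$, forcing $\rho(\eta_p)\in\{0,\tfrac12\}$.

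Next I would run the dynamical step. Let $q\in\{1,2\}$ be the denominator of $\rho(\eta_p)$, so that $\eta_p^q$ is a real-analytic orientation-preserving circle diffeomorphism with rotation number $0$, hence with a fixed point. Lifting to $\R$ and subtracting the identity, $\eta_p^q-\mathrm{id}$ is real analytic; if it does not vanish identically it has only finitely many zeros, so $\eta_p^q$ has finitely many (alternately attracting and repelling) fixed points and every orbit is asymptotic to them. Then the non-wandering set of $\eta_p^q$ is this finite fixed-point set, and every $\eta_p^q$-invariant probability measure is supported there, hence atomic. But $\mu$ is $\eta_p$-invariant, therefore $\eta_p^q$-invariant, and non-atomic --- a contradiction. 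Hence $\eta_p^q=\mathrm{id}$, and since $q\mid 2$ we conclude $\eta_p^2=\mathrm{id}$, which as noted makes $p$ a pole and proves the theorem.

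The hard part is not the circle-dynamics endgame, which is soft, but making the three structural inputs rigorous and mutually compatible: that the invariant measure supplied by \cite{SZRA} is genuinely an $L^1$ (non-atomic) measure rather than merely a weak limit that could concentrate on a Cantor or finite set; that $\eta_p$ is a bona fide real-analytic orientation-preserving diffeomorphism, so that $\eta_p^q-\mathrm{id}$ has isolated zeros and the rotation-number calculus applies; and that the common first return time $T_p$ is used consistently so that $\eta_p=G^{T_p}|_{S^*_pM}$ indeed maps the fibre to itself. Granting these, the combination of the time-reversal constraint $\rho(\eta_p)\in\{0,\tfrac12\}$ with non-atomicity is exactly what upgrades ``self-focal with an invariant $L^1$ measure'' to ``pole.''
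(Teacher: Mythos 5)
Your proposal is correct in its essentials and follows the same route as the paper: it reduces Theorem~\ref{maintheorem}, via the main result of \cite{SZRA}, to the circle-map rigidity statement that is the paper's Proposition~\ref{etap}, and proves that statement by the identical chain of ideas --- time-reversal invariance gives $\sigma\,\eta_p\,\sigma=\eta_p^{-1}$ with $\sigma$ the antipodal map, hence $\rho(\eta_p)\in\{0,\tfrac12\}$ and $\eta_p^2$ has a fixed point; then real analyticity plus the invariant $L^1$ measure forces $\eta_p^2=\mathrm{id}$, since otherwise $\mathrm{Fix}(\eta_p^2)$ is finite, every orbit is asymptotic to it, and any invariant probability measure would be atomic. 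Your endgame (invariant measures are carried by the non-wandering set, which equals the finite fixed-point set) is a slightly cleaner packaging of the paper's decomposition of $\mathrm{supp}\,\mu$ into basins of attraction, but the content is the same.

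There is, however, one genuine gap: you assert that $\eta_p$ is orientation \emph{preserving} ``because $g$ is real analytic and $p$ is self-focal.'' Neither hypothesis has anything to do with orientation. What actually follows (and what the paper proves, via restriction of the geodesic flow to a symplectic transversal) is only that $\eta_p$ is a diffeomorphism, hence orientation preserving \emph{or} reversing, and the paper devotes a separate subsection to the reversing case, passing to $\eta_p^2$, which is orientation preserving and still preserves $\mu$. Your proof is easily patched without new ideas: if $\eta_p$ were orientation reversing, it would have a fixed point (no rotation-number argument needed), so $\eta_p^2$ would be an orientation-preserving real-analytic circle diffeomorphism with a fixed point preserving an $L^1$ measure, and your finiteness/non-atomicity argument applies verbatim to give $\eta_p^2=\mathrm{id}$. (The paper additionally records Burns's observation that the reversing case never actually occurs: if $\eta_p$ reverses orientation, so does $-\eta_p$, which then has a fixed point $\xi$, giving $G^{T_p}(p,\xi)=(p,-\xi)$ --- a unit-speed geodesic that returns to $p$ and reverses itself, which is impossible.) With that case either patched or excluded, your argument is complete and coincides with the paper's.
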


Thus, $(M, g)$ is a $C^p_{2T_p}$-manifold in the terminology of \cite{Be}
(Definition 7.7(e)). 
Theorem \ref{maintheorem} proves the  conjecture stated on p. 152 of \cite{STZ} in the case of real analytic
surfaces.  It  follows by combining the main result of
\cite{SZRA} with the following:

\begin{prop} \label{etap} Let $(S^2, g)$ be a two-dimensional real analytic Riemannian
surface. Suppose that $p \in S^2$ is a  self-focal point and that
the first return map $\eta_p : S^*_p S^2 \to S^*_p S^2$ preserves a probability
measure which is in $L^1(S^*_p S^2)$. Then $\eta_p^2$ is the identity map,
and in particular all geodesics through $p$ are smoothly closed with the common 
period  $2 T_p$.
\end{prop}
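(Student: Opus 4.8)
The plan is to reduce the Proposition to a rigidity statement about a single real-analytic circle diffeomorphism. Since $g$ is real analytic, the geodesic flow is real analytic, so the first return map $\eta_p$ is a real-analytic orientation-preserving diffeomorphism of the circle $S^*_p S^2$. The first thing I would extract is a \emph{reversibility} relation coming from time reversal. Write $\sigma\colon\xi\mapsto-\xi$ for the antipodal map on the fibre, which is rotation by a half-turn and hence orientation preserving, and let $\gamma_\xi$ be the unit-speed geodesic with $\gamma_\xi(0)=p$, $\dot\gamma_\xi(0)=\xi$, so that $\eta_p(\xi)=\dot\gamma_\xi(T_p)$. The reversed geodesic $t\mapsto\gamma_\xi(T_p-t)$ starts at $p$ in direction $-\eta_p(\xi)$ and returns to $p$ in direction $-\xi$ (and $T_p$ is still its first return, since $\gamma_\xi$ omits $p$ on $(0,T_p)$). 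Hence $\eta_p\circ\sigma\circ\eta_p=\sigma$, equivalently
\[
\sigma\,\eta_p\,\sigma=\eta_p^{-1},
\]
so $\eta_p$ is conjugate, through the orientation-preserving involution $\sigma$, to its own inverse.

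Next I would run the rotation-number argument. Let $\rho=\rho(\eta_p)\in\mathbb{R}/\mathbb{Z}$ be the Poincar\'e rotation number. It is invariant under conjugation by orientation-preserving homeomorphisms and satisfies $\rho(\eta_p^{-1})=-\rho(\eta_p)$; feeding these into the reversibility relation gives $\rho=-\rho$ in $\mathbb{R}/\mathbb{Z}$, i.e. $2\rho\equiv0$, so $\rho\in\{0,\tfrac12\}$. In particular $\rho$ is rational, and there is $q\in\{1,2\}$ with $\rho(\eta_p^q)=0$, so that $F:=\eta_p^q$ has a fixed point.

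The decisive step combines the $L^1$ hypothesis with real-analyticity. The invariant probability measure $\mu$ has an $L^1$ density, hence is \emph{nonatomic}, and it is invariant under $F$. Suppose $F\neq\mathrm{id}$. Being real analytic, $F$ has $\mathrm{Fix}(F)$ equal to the zero set of a nonzero $1$-periodic real-analytic function, hence finite; its complement is a finite union of open arcs on each of which $F$ moves every point monotonically toward one endpoint. On such an arc $I$, for a base point $x$ the arcs $(F^n x,F^{n+1}x)$, $n\in\mathbb{Z}$, are pairwise disjoint, exhaust $I$, and share a common $\mu$-measure by invariance; finiteness of $\mu$ forces this common value to be $0$, whence $\mu(I)=0$. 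As $\mu$ is nonatomic it also gives no mass to the finite set $\mathrm{Fix}(F)$, so $\mu(S^*_pS^2)=0$, contradicting that $\mu$ is a probability measure. Therefore $F=\eta_p^q=\mathrm{id}$, and since $q\in\{1,2\}$ we get $\eta_p^2=\mathrm{id}$.

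Finally, $\eta_p^2=\mathrm{id}$ means that for every $\xi$ the geodesic $\gamma_\xi$ returns to $p$ at time $2T_p$ with $\dot\gamma_\xi(2T_p)=\eta_p^2(\xi)=\xi$; since both base point and direction match the initial data, $\gamma_\xi$ is a smoothly closed geodesic of period $2T_p$, which is the assertion of the Proposition. I expect the main obstacles to be the careful bookkeeping in the reversibility relation (the convention for whether $\eta_p(\xi)$ records the outgoing or incoming direction, and checking that $\sigma$ is orientation preserving on the fibre so the rotation-number computation applies) together with the verification that $\eta_p$ itself is orientation preserving; the measure-theoretic step is robust, but it crucially needs real-analyticity to make $\mathrm{Fix}(\eta_p^q)$ finite, since for a merely smooth map the invariant mass could concentrate on a Cantor set of fixed points.
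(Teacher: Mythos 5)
Your argument follows the paper's own proof almost step for step: reversibility of $\eta_p$ via time reversal (the paper conjugates the Hamilton flow by $\tau(x,\xi)=(x,-\xi)$ and restricts to the fiber; your geodesic-level computation is the same identity), the consequence that the rotation number lies in $\{0,\tfrac12\}$, hence a fixed point for $\eta_p^2$, and finally real-analyticity plus the invariant $L^1$ measure forcing $\eta_p^2=\mathrm{id}$. Your last step is in fact tidier than the paper's: where the paper decomposes $\mathrm{supp}\,\mu$ into basins of attraction of the finitely many fixed points, you compute the common measure of the disjoint wandering arcs $(F^n x, F^{n+1}x)$ directly; both rest on the same mechanism (a finite nonatomic invariant measure cannot coexist with a dynamics in which every orbit converges to one of finitely many fixed points). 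One small factual slip: your parenthetical that $\gamma_\xi$ omits $p$ on $(0,T_p)$ need not hold, since $T_p$ is the first \emph{common} return time and individual geodesics may loop back earlier; but nothing in your derivation of $\eta_p\sigma\eta_p=\sigma$ uses it, because $\eta_p$ is by definition the time-$T_p$ flow map restricted to $S^*_pS^2$.

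The one genuine gap is your opening assertion that $\eta_p$ is orientation preserving, which you flag at the end but never justify. This is not cosmetic: the rotation number, and with it your entire second step, is undefined for an orientation-reversing circle map. The paper treats this as a case requiring an argument: it first notes $\eta_p$ is a diffeomorphism (so it either preserves or reverses orientation), and then devotes a separate subsection to the reversing case, passing to $\eta_p^2$ (orientation preserving, still $\mu$-invariant), supplemented by Burns' observation that the reversing case is actually vacuous --- if $\eta_p$ reversed orientation, so would $\sigma\circ\eta_p$, which would then have a fixed point, i.e.\ a direction $\xi$ with $G^{T_p}(p,\xi)=(p,-\xi)$; uniqueness of geodesics then gives $\gamma_\xi(T_p-t)=\gamma_\xi(t)$, forcing $\dot\gamma_\xi(T_p/2)=0$, which contradicts unit speed. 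You could also close the gap entirely within your own framework: an orientation-reversing circle homeomorphism always has a fixed point, so $\eta_p^2$ has a fixed point, is orientation preserving, real analytic, and $\mu$-invariant, and your analyticity-plus-measure step then yields $\eta_p^2=\mathrm{id}$ with no rotation-number input at all. As written, though, the case is simply missing.
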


The proof only involves dynamics and not eigenfunctiosn of $\Delta_g$.
To explain how Theorem \ref{maintheorem} follows from Proposition
\ref{etap}, we  first recall some of the definitions and main result of \cite{SZRA} to establish
notation. We then give the proof of Proposition \ref{etap}. 

A natural question is whether all  real analytic Riemannian surfaces with
maximal eigenfunction growth  
are surfaces of revolution. There are many $P^m_{\ell}$ metrics
besides surfaces of revolution.  A second question is whether the Proposition
has some kind of generalization to higher dimensions. 

We thank Keith Burns for reading an earlier version of this note and
for his comments. 

\section{Background on maximal eigenfunction growth}

Let 
$\eta_t(x,\xi)=(x(t),\xi(t))$ denote the  homogeneous Hamilton flow on $T^* M \backslash 0$  generated by  $H(x, \xi) = |\xi|_g$.  Since  $\eta_t$ 
preserves the unit cosphere bundle $S^*M = \{|\xi|_g = 1\}$, it defines a flow on  $S^*M$ which preserves   Liouville measure.  For a given $x\in M$, let ${\mathcal L}_x\subset S^*_xM$ denote the set of {\it loop directions}, i.e.  unit directions $\xi$ for which $\eta_t(x,\xi)\in S_x^*M$ for some time $t\ne 0$. Also, let
$d\mu_x$ denote the  measure on  $S^*_x M$  induced by the Euclidean metric $g_x$ and let $|{\mathcal L}_x| = \mu_x({\mathcal L}_x)$.

We say that $p$ is a {\it self-focal point}  if there exists a  time $\ell>0$ so that $\eta_\ell(p,\xi)\in S^*_pM$ for
all $\xi\in S^*_p M$, i.e. if $\lcal_p = S^*_p M$.   We let $T_p$ be the minimal such time,  and write
\begin{equation} \eta_{T_p}(p,\xi)=(p,\eta_p(\xi)), \quad \xi\in S^*_pM.
\end{equation}Under the assumption that $g$ is real analytic, the {\it first
return map}
\begin{equation} \label{FRM} \eta_p: S^*_pM\to S^*_pM \end{equation}
 is also   real analytic.  

The key property of interest is that $\eta_p$ is conservative in the following
sense:

\begin{defin} We say  $\nu_p$ is conservative if it preserves a measure $\rho d\mu_p$
on $S^*_p M$ which is absolutely continuous with respect to $\mu_p$
and of finite mass:
\begin{equation} \label{PROP} \exists \rho     \in L^1(\mu_p), \;\; \eta_p^* 
\rho d\mu_p = \rho d\mu_p. \end{equation}
\end{defin}

\begin{theo} \cite{SZRA} Suppose that $(M, g)$ is a compact  real analytic manifold without
boundary which  possesses a sequence
$\{e_{j_k}\}$ of eigenfunctions satisfying \eqref{MAX}. Then $(M, g)$
possesses a self-focal point $p$ whose first return map $\eta_p$ is 
conservative.
\end{theo}

\section{Proof of Proposition \ref{etap}}

It is not particularly important to the proof, but we may assume with
no loss of generality that $M$ is diffeomorphic to $S^2$. The proof
is a standard one on manifolds with focal points; we refer to \cite{SZ}
for background. We also assume throughout that $\eta_p$ is real 
analytic, since that is the case in our setting; most of the statements
below are true for smooth circle maps.

We note that $\eta_p$ is the restriction of the geodesic flow  $G^{T}$ to the invariant
set $S^*_p S^2$. This circe is contained in a symplectic transversal $S_p$ to the geodesic
flow. On the symplectic transversal $G^T$ is a symplectic map  which is invertible. Hence
$\eta_p$ is invertible. 
Thus, $(D \eta_p)_{\omega}$ is non-zero for all $\omega \in S^*_p S^2$.  It follows that $\eta_p$ is
either orientation preserving or orientation reversing.

Next we use  time reversal invariance of the geodesic
flow to show that $\eta_p$ is conjugate to its inverse.

\begin{lem} $\eta_p$ is reversible (conjugate to its inverse). \end{lem}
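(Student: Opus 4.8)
The plan is to exploit the time-reversal symmetry of the geodesic flow to produce an explicit conjugacy between $\eta_p$ and $\eta_p^{-1}$. The natural candidate for the conjugating map is the antipodal map $A : S^*_p S^2 \to S^*_p S^2$, $A(\xi) = -\xi$, which reverses the direction of each geodesic emanating from $p$. First I would recall the precise meaning of time-reversal invariance: if $\eta_t(p,\xi) = (x(t), \xi(t))$ is the unit-speed geodesic in phase space, then the geodesic with reversed initial velocity, namely $\eta_t(p, -\xi)$, traces out the same geodesic arc backwards, so that $\eta_t(p, -\xi) = \bigl(x(-t), -\xi(-t)\bigr)$. The key point is that this reversal commutes with the looping structure at $p$, so that a direction $\xi$ loops back in time $T_p$ precisely when $-\xi$ does.

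The main step is to translate this phase-space identity into a statement about the first return map on $S^*_p S^2$. Starting from $\xi \in S^*_p S^2$, the return direction is $\eta_p(\xi)$, defined by $\eta_{T_p}(p,\xi) = (p, \eta_p(\xi))$. Applying the time-reversal relation to the reversed initial direction $-\eta_p(\xi)$ and flowing forward for time $T_p$, the geodesic retraces the original loop backwards and returns to $p$ with direction $-\xi$. In other words, I expect to verify the identity
\begin{equation}
\eta_p(-\eta_p(\xi)) = -\xi, \qquad \text{i.e.} \qquad \eta_p \circ A \circ \eta_p = A,
\end{equation}
which rearranges to $A \circ \eta_p \circ A = \eta_p^{-1}$. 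Since $A$ is a real-analytic involution of the circle $S^*_p S^2$, this exhibits $\eta_p$ as conjugate to its inverse via $A$, which is exactly reversibility.

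The one subtlety I would be careful about is the bookkeeping of signs and base points in the identity $\eta_t(p,-\xi) = (x(-t), -\xi(-t))$, making sure the return time for the reversed geodesic is indeed the same $T_p$ (this follows because $T_p$ is the common minimal return time and reversal preserves arc length), and that $\eta_p$ is genuinely invertible so that $\eta_p^{-1}$ makes sense. The latter has already been established in the paragraph preceding the lemma, where it is shown that $G^T$ restricts to an invertible symplectic map on the transversal $S_p$, so $(D\eta_p)_\omega \ne 0$ everywhere. Thus the only real content is verifying the reversal identity carefully; once that is in hand, reversibility is immediate and the antipodal map $A$ serves as the explicit conjugacy.
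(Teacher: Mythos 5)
Your proof is correct and takes essentially the same approach as the paper: both exhibit the map $\xi \mapsto -\xi$ (the paper's $\tau(x,\xi)=(x,-\xi)$ restricted to the $\tau$-invariant circle $S^*_pM$) as an explicit conjugacy between $\eta_p$ and $\eta_p^{-1}$, using time-reversal invariance of the geodesic flow. The only difference in emphasis is that the paper devotes its effort to proving the flow-level identity $\tau G^t \tau = G^{-t}$ from the Hamiltonian structure (via $\tau_*\Xi_H = -\Xi_H$ in Darboux coordinates), whereas you take that standard geometric fact as given and instead spell out the return-map bookkeeping $\eta_p(-\eta_p(\xi)) = -\xi$.
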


\begin{proof} Let $\tau(x, \xi) = (x, -\xi)$ on $S^*M$. Then on all of $S^*M$, we have  $\tau G^t \tau = G^{-t}$. 
Indeed, let  $\Xi_H$ be the Hamilton vector field of $H(x, \xi) = |\xi|_g$.
Then $H \circ \tau = H$, i.e. $H$ is time reversal invariant. We
claim that $\tau_* \Xi_H = - \Xi_H$. Written in Darboux
coordinates,
$$\Xi_H = \sum_j \frac{\partial H}{\partial \xi_j}
\frac{\partial}{\partial x_j} - \frac{\partial H}{\partial x_j}
\frac{\partial}{\partial \xi_j}. $$ If we let $(x, \xi) \to (x, -
\xi)$ and use invariance of the Hamiltonian we see that the vector
field changes sign.
Now, $G^{-t}$ is the Hamilton flow of $- \Xi_H$ and that is
$\tau_* \Xi_H$. But the Hamilton flow of the latter is $\tau G^t
\tau$.

Since $S^*_p M$ is invariant under $\tau$,  we just
restrict the identity  $\tau G^t \tau = G^{-t}$ to $S^*_p M $ to see that $\eta_p$ is reversible.
\end{proof}

\subsection{Orientation preserving case}

First let us assume that $\eta_p$ is orientation preserving. Then it has a rotation number. 
We recall that the rotation number of a circle homeomorphism is
defined by
$$r(f) = \left( \lim_{n \to \infty} \frac{F^{n}(x) - x}{n} \right)
\;\;\; \mbox{mod}\;\; 1. $$ Here, $F:\R \to \R$ is a lift of $f$,
i.e. a  map satisfying $F(x + 1) = F(x)$ and $f = \pi \circ F$
whee $\pi: \R \to \R/\Z$ is the standard projection. The rotation
number is independent of the choice of $F$ or of $x$. It is
rational if and only if $f$ has a periodic orbit. For background,
see \cite{F}.

\begin{lem} The rotation number of $\eta_p$ is either $0$ or $\pi$.
\end{lem}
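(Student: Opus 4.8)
The plan is to exploit the two standard transformation rules for the rotation number of an orientation-preserving circle homeomorphism, together with the reversibility established in the preceding lemma. Recall that (i) the rotation number is invariant under conjugation by an orientation-preserving homeomorphism, and (ii) the rotation number of the inverse is the negative of the rotation number, $r(f^{-1}) = -r(f) \pmod 1$. The reversibility lemma supplies a homeomorphism---the restriction of $\tau(x,\xi) = (x,-\xi)$ to $S^*_p S^2$---that conjugates $\eta_p$ to $\eta_p^{-1}$, so these two rules should combine into a rigid constraint on $r(\eta_p)$.

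First I would verify that the conjugating map $\tau|_{S^*_p S^2}$ is orientation \emph{preserving}. This is the crucial bookkeeping step: on the cosphere circle $S^*_p S^2$ the map $\xi \mapsto -\xi$ is the antipodal map, and on a one-dimensional sphere the antipodal map is rotation by $\pi$, hence orientation preserving. By contrast, were $\tau$ orientation reversing, conjugation by it would itself negate the rotation number and the argument would collapse to a tautology, so establishing the correct orientation behavior is exactly what makes the lemma work.

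Granting this, I would combine (i) and (ii): since $\eta_p$ is conjugate to $\eta_p^{-1}$ via the orientation-preserving map $\tau$, invariance under conjugation gives $r(\eta_p) = r(\eta_p^{-1})$, while the inversion rule gives $r(\eta_p^{-1}) = -r(\eta_p)$. Hence $r(\eta_p) = -r(\eta_p) \pmod 1$, i.e. $2 r(\eta_p) \equiv 0 \pmod 1$, forcing $r(\eta_p) \in \{0, \tfrac12\}$ as an element of $\R/\Z$. In terms of the rotation angle on the parametrized circle this is precisely a rotation by $0$ or by $\pi$, which is the asserted conclusion.

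The main obstacle is not the algebra---immediate once the two rules are in hand---but the orientation bookkeeping of the second paragraph, since that is what turns reversibility into the nontrivial relation $2r \equiv 0$ rather than a vacuous identity. A secondary point worth flagging is that this lemma only pins down the rotation number; distinguishing $r=0$ from $r=\tfrac12$ and upgrading to the assertion $\eta_p^2 = \mathrm{id}$ of Proposition \ref{etap} will require the subsequent analysis of periodic orbits together with the $L^1$ invariant-measure hypothesis, not this lemma alone.
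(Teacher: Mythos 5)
Your proof is correct and follows essentially the same route as the paper: reversibility of $\eta_p$ combined with the rule $r(f^{-1})=-r(f)$ and conjugation invariance forces $2r(\eta_p)\equiv 0 \pmod 1$. The one point where you go beyond the paper's (very terse) argument is the explicit check that the conjugating map $\xi\mapsto-\xi$ is orientation preserving on the circle $S^*_pS^2$ --- a detail the paper leaves implicit but which is exactly what prevents the identity $r(\eta_p)=-r(\eta_p)$ from degenerating into a tautology, so it is a worthwhile addition rather than a deviation.
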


\begin{proof}

For a circle homeomorphism, the
rotation number $\tau(f^{-1})$ is always $- \tau(f)$. Since $\eta_p$
is reversible, $\tau(\eta_p) = - \tau(\eta_p)$, i.e. its   rotation number can only be $0, \pi$. 

\end{proof}

\begin{lem} $\eta_p^2$ has fixed points. \end{lem}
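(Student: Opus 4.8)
The plan is to reduce the statement to Poincar\'e's classical theory of circle maps, using the preceding lemma to pin down the rotation number. Recall that for an orientation-preserving homeomorphism $f$ of the circle the rotation number $r(f)$ is rational if and only if $f$ has a periodic orbit, and that when $r(f)=p/q$ in lowest terms every periodic point of $f$ has minimal period exactly $q$.

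First I would observe that in the case under consideration $\eta_p$ is an orientation-preserving real-analytic diffeomorphism of the circle $S^*_p S^2$, hence in particular a circle homeomorphism whose rotation number is well defined and to which the Poincar\'e theory applies. By the previous lemma this rotation number is either $0$ or $\frac12$, both of which are rational; consequently $\eta_p$ possesses a periodic orbit in either case.

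It then remains only to read off the period. If $r(\eta_p)=0$, the periodic orbit consists of honest fixed points of $\eta_p$, each of which is automatically fixed by $\eta_p^2$. If $r(\eta_p)=\frac12$, the denominator in lowest terms is $2$, so the periodic orbit has minimal period $2$; each of its points, while not fixed by $\eta_p$, is then fixed by $\eta_p^2$. In either case $\eta_p^2$ has a fixed point, which proves the lemma.

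I do not anticipate a genuine obstacle here, since the entire content is carried by the rotation-number computation of the previous lemma; the only point requiring a little care is the invocation of the ``period equals $q$'' part of the Poincar\'e classification in the case $r=\frac12$, to guarantee that the period-two orbit really does furnish fixed points of $\eta_p^2$. Note that the conservativity hypothesis plays no role at this step; it will be needed only afterwards, to upgrade the mere existence of fixed points of $\eta_p^2$ to the identity $\eta_p^2=\mathrm{id}$.
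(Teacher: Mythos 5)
Your proposal is correct and takes essentially the same approach as the paper: both arguments rest on the rotation number computed in the previous lemma together with Poincar\'e's classical theory of circle homeomorphisms. The only difference is cosmetic --- the paper passes to $\eta_p^2$, whose rotation number is $2\rho(\eta_p)\equiv 0$, and invokes ``rotation number zero iff fixed point'' (Franks, Theorem 2.4), whereas you case-split on $\rho(\eta_p)\in\{0,\tfrac12\}$ and use the refinement that periodic orbits have minimal period equal to the denominator; both are standard and equally valid.
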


\begin{proof}
The rotation number of $\eta_p^2$ is $0$. But
it  is known that  $\tau(f) =
0$ if and only if $f$ has a fixed point. See \cite{F}, Theorem
2.4.
\end{proof}

We now complete the proof 
that $\eta_p^2 = Id$ if $\eta_p$ is orientable. Since $\eta_p$ is real
analytic, this is the case if  $\eta_p^2$ has infinitely
many fixed points, so we may assume that $\mbox{Fix}(\eta_p^2)$ is finite (and non-empty). We write
$\# \mbox{Fix}(\eta_p^2) = N$ and denote the fixed points by $p_j$.

If $N = 1$, i.e.   $\eta_p^2$ has one fixed point $Q$, then $S^1 \backslash \{Q\}$ is an interval
and $\eta_p^2$ is a monotone map of this interval. So every orbit is asymptotic to the fixed
point of $\eta_p^2$.

Let $\mu$ be the $L^1$  invariant measure for $\eta_p^2$ and let $K = \mbox{supp} \mu$. We
can decompose $K$ into $N$ subsets $K_j$ such that $\eta_p^2(K_j) \to p_j$.  $K_j$
is the basin of attraction of $p_j$.

 Then $$\mu(K_j) = \mu(\eta^{2p}(K)_j) \to \mu(\{p_j\}). $$
But $p_j \in K_j$ so it must be that $K_j = \{p_j\}$. This shows that $\mu$ cannot be $L^1$,
concluding the proof.

\subsection{Orientation reversing case}

The square $\eta_p^2$ of an orientation reversing diffeomorphism of $S^*_p S^2$ is
an orientation preserving diffeomorphism. If $\eta_p$ preserves the measure $d\mu$ then
so does $\eta_p^2$. Thus we reduce to the orientation preserving case.

\begin{rem} Keith Burns pointed out to us that the orientation reversing
case cannot occur. If $\eta_p$ is orientation reversing, so is $- \eta_p$. 
An orientation reversing homeomorphism of the circle must have a
fixed point $\xi \in S^*_p M$. But then $G^T(p, \xi) = (p, - \xi)$ for
some $T > 0$, which is impossible.

\end{rem}

\end{document}